\numberwithin{equation}{section}
\newtheorem{theorem}{Theorem}[section]
\newtheorem{lemma}[theorem]{Lemma}
\newtheorem{proposition}[theorem]{Proposition}
\theoremstyle{definition}
\newtheorem{remark}[theorem]{Remark}
\newcommand\Supp{\operatorname{Supp}}
\newcommand\Tor{\operatorname{Tor}}
\newcommand\Hom{\operatorname{Hom}}
\newcommand\Ext{\operatorname{Ext}}
\newcommand\Rad{\operatorname{Rad}}
\newcommand\Ker{\operatorname{Ker}}
\newcommand\Coker{\operatorname{Coker}}
\newcommand{\xx}{\underline x}
\newcommand{\lam}{\Lambda^I}
\newcommand{\Cech}{{\Check {C}}_{\xx}}
\title{A criterion for $I$-adic completeness}
\author{Peter Schenzel}
\address{Martin-Luther-Universit\"at Halle-Wittenberg,
Institut f\"ur Informatik, D --- 06 099 Halle (Saale),
Germany}
\email{peter.schenzel@informatik.uni-halle.de}
\subjclass[2010]{Primary: 13J10 ; Secondary: 13C11, 13D07}
\keywords{adic completion, flat module, Ext}
\begin{document}

\begin{abstract} 
Let $I$ denote an ideal in a commutative Noetherian ring $R$. Let $M$ be an 
$R$-module. The $I$-adic completion is defined by 
$\hat{M}^I = \varprojlim{}_{\alpha} M/I^{\alpha}M$. Then $M$ is called $I$-adic 
complete whenever the natural homomorphism $M \to \hat{M}^I$ is an isomorphism. 
Let $M$ be $I$-separated, i.e. $\cap_{\alpha} I^{\alpha}M = 0$. In the main 
result of the paper it is shown that $M$ is $I$-adic complete if and only if 
$\Ext_R^1(F,M) = 0$ for the flat test module $F = \oplus_{i = 1}^r R_{x_i}$ 
where $\{x_1,\ldots,x_r\}$ is a system of elements such that $\Rad I = \Rad \xx R$. 
This result extends several known statements starting with C. U. Jensen's result 
(see \cite[Proposition 3]{J})
that a finitely generated $R$-module $M$ over a local ring $R$ is complete if and only 
if $\Ext^1_R(F,M) = 0$ for any flat $R$-module $F$. 
\end{abstract}

\maketitle

\section{Introduction} Let $R$ denote a commutative Noetherian ring. For an ideal 
$I \subset R$ we consider the $I$-adic completion. For an $R$-module $M$ it is defined by 
$\varprojlim{}_{\alpha} M/I^{\alpha}M = \hat{M}^I$. In the case of $(R,\mathfrak{m})$ 
a local ring and a finitely generated $R$-module $M$ it was shown (see \cite{pS4}) that the following conditions 
are equivalent: 
\begin{itemize}
\item[(i)] $M$ is $\mathfrak{m}$-adic complete.
\item[(ii)] $\Ext_R^1(F,M) = 0$ for any flat $R$-module $F$.
\item[(iii)] $\Ext^1_R(F,M) = 0$ for $F = \oplus_{i=1}^r R_{x_i},$ where $x_1,\ldots,x_r \in 
\mathfrak{m}$ are elements that generate an $\mathfrak{m}$-primary ideal. 
\end{itemize}
Here $R_x$ denotes 
the localization of $R$ with respect to $\{x^{\alpha} | \alpha \in \mathbb{N}_{\geq0} \}$.
This is an extension (in the local case) of a result of C. U. Jensen (see \cite[Proposition 3]{J}) who 
proved the  equivalence of the first two conditions. The main result of the present 
paper is an extension to the case of $I$-adic completion. More precisely we prove the following 
result:

\begin{theorem} \label{1.1} Let $I$ be an ideal of a commutative Noetherian ring $R$. 
Let $M$ denote an arbitrary $R$-module that is $I$-separated, i.e. $\cap_{\alpha} I^{\alpha}M = 0$. 
Then  the following conditions are equivalent: 
\begin{itemize}
\item[(i)] $M$ is $I$-adic complete.
\item[(ii)] $\Ext_R^i(F,M) = 0$ for all $i \geq 0$ and any flat $R$-module $F$ with 
$F \otimes_R R/I = 0$.
\item[(iii)] $\Ext_R^1(R_x,M) = 0$ for all elements $x \in I$.
\item[(iv)] $\Ext_R^1(\oplus_{i=0}^r R_{x_i},M) = 0$ for a system of elements $\underline{x} = \{x_1,\ldots,x_r\}$ such that $\Rad \underline{x}R = \Rad I$.
\end{itemize}
\end{theorem}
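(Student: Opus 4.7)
The plan is to prove the cycle (i) $\Rightarrow$ (ii) $\Rightarrow$ (iii) $\Rightarrow$ (iv) $\Rightarrow$ (i). The implications (ii) $\Rightarrow$ (iii) $\Rightarrow$ (iv) are routine: for any $x \in I$, $R_x$ is flat with $R_x \otimes_R R/I = 0$ (because $x$ is a unit in $R_x$), so (ii) applies; and if $\Rad \xx R = \Rad I$ then each $x_i^{k_i} \in I$ with $R_{x_i} = R_{x_i^{k_i}}$, so (iii) applies to each summand and the splitting of $\Ext^1$ over the finite direct sum yields (iv).

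For (i) $\Rightarrow$ (ii), I first observe that $F \otimes_R R/I = 0$ forces $F = I^n F$ for every $n$, so for any $R/I^n$-module $N$ and projective resolution $P_\bullet \to F$, the complex $P_\bullet \otimes_R R/I^n$ is an acyclic bounded-above complex of projective $R/I^n$-modules, hence contractible; this gives $\Ext^i_R(F, N) = 0$ for all $i \geq 0$. Since $M$ is $I$-adic complete and the transition maps in $\{M/I^n M\}$ are surjective, the sequence
\[
0 \to M \to \prod_n M/I^n M \xrightarrow{1 - \sigma} \prod_n M/I^n M \to 0
\]
is exact; applying $\Hom_R(F,-)$ together with the fact that $\Ext^i_R(F,-)$ commutes with products in the second argument collapses the long exact sequence to $\Ext^i_R(F, M) = 0$ for every $i \geq 0$.

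The main step is (iv) $\Rightarrow$ (i). Since $\Rad \xx R = \Rad I$, the $I$-adic and $\xx R$-adic topologies on $M$ coincide, and I may assume $I = \xx R$. The hypothesis splits as $\Ext^1_R(R_{x_i}, M) = 0$ for each $i$. For the base case $r = 1$, the telescoping presentation $0 \to R^{(\mathbb N)} \to R^{(\mathbb N)} \to R_x \to 0$ identifies $\Ext^1_R(R_x, M)$ with the cokernel of $(m_n) \mapsto (m_n - x m_{n+1})$ on $\prod_n M$; its vanishing supplies, for any $x$-adic Cauchy sequence $(m_n) \subset M$ with $m_{n+1} - m_n = x^n a_n$, a solution $(s_n)$ of $s_n - x s_{n+1} = a_n$ satisfying $s_0 - \sum_{k < n} x^k a_k = x^n s_n \in x^n M$, so $m_0 + s_0$ is the desired $x$-adic limit. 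The $x$-separatedness of $M$ (from $\bigcap_n x^n M \subset \bigcap_n I^n M = 0$) gives uniqueness, so $M$ is $x_i$-adic complete for each $i$. To bootstrap to $\xx$-adic completeness I prove by induction on $r$ the following lemma: \emph{if $M$ is $J$-adic complete and $x$-adic complete and $(J + (x))$-separated, then $M$ is $(J + (x))$-adic complete}. Given a $(J, x)$-Cauchy sequence $(\tilde m_n)$, write $\tilde m_{n+1} - \tilde m_n = \sum_{j=0}^n x^j c_{n, j}$ with $c_{n, j} \in J^{n-j} M$; set $v_j := \sum_{n \geq j} c_{n, j}$, convergent in $M$ by $J$-adic completeness, and $w := \sum_{j \geq 0} x^j v_j$, convergent in $M$ by $x$-adic completeness; then $m := \tilde m_0 + w$ is the required lift. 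Iterating with $J = (x_1, \ldots, x_k)$ and $x = x_{k+1}$ concludes the argument.

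The hardest step will be the final verification that $m - \tilde m_n \in (J, x)^n M$ for every $n$. After reorganising the iterated sum, the obstruction splits as a tail $\sum_{j \geq n} x^j v_j \in x^n M$ and a finite contribution $\sum_{j < n} x^j \big(\sum_{k \geq n} c_{k, j}\big)$, for which one needs each inner sum to lie in $J^{n-j} M$; this uses that $J^{n-j} M = \bigcap_{\ell}\,(J^{n-j} M + J^\ell M)$ is closed in the $J$-adic topology, so the $J$-adic limit of the partial sums of elements of $J^{n-j} M$ remains in $J^{n-j} M$. The non-canonicity of the decomposition $\tilde m_{n+1} - \tilde m_n = \sum x^j c_{n, j}$ causes no trouble once a single such choice is fixed at the outset.
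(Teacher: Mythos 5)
Your proof is correct, and while it follows the same overall cycle (i)$\Rightarrow$(ii)$\Rightarrow$(iii)$\Rightarrow$(iv)$\Rightarrow$(i) as the paper, the two substantial implications are executed by genuinely different means. For (i)$\Rightarrow$(ii) the paper's primary argument (Theorem \ref{3.1}) runs through the \v{C}ech complex and the left derived completion functors $\lam_i$; your argument --- reduce to $\Ext^i_R(F,M/I^nM)=0$ via contractibility of $P_\bullet\otimes_R R/I^n$, then use the Milnor sequence $0\to M\to\prod M/I^nM\to\prod M/I^nM\to 0$ and the fact that $\Ext$ commutes with products in the second variable --- is essentially the paper's own ``alternative proof'' in Section 4 (Lemma \ref{4.2} plus Theorem \ref{4.3}), so here you have independently reproduced the elementary route. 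For (iv)$\Rightarrow$(i) the paper invokes Theorem \ref{3.4}, whose proof passes through the $x$-torsion submodule $N=\cup_\alpha\, 0:_Mx^\alpha$ and carries an extra stabilization hypothesis on $0:_Mx^\alpha$; your direct computation with the telescope presentation of $R_x$ --- solving $s_n-xs_{n+1}=a_n$ and reading off $s_0-\sum_{k<n}x^ka_k\in x^nM$ --- gets surjectivity of $M\to\hat M^{x}$ from $\Ext^1_R(R_x,M)=0$ alone, with injectivity coming from separatedness, and so avoids that auxiliary hypothesis entirely. Likewise your inductive lemma (complete for $J$ and for $x$, separated for $J+(x)$, implies complete for $J+(x)$), with the careful rearrangement of the double sum and the observation that $J^{n-j}M$ and $x^nM$ are closed in the respective topologies, is a fully verified version of the paper's rather terse Proposition \ref{2.5}(b). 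What the paper's heavier machinery buys is the stronger intermediate results (Theorems \ref{3.1} and \ref{3.3}, which concern $\hat M^I$ for an arbitrary $M$ and the comparison of $\Ext$ along $M\to\hat M^I$); what your version buys is a self-contained, elementary proof of Theorem \ref{1.1} itself with slightly weaker hypotheses at the intermediate steps.
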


Note that  in his paper (see \cite[Proposition 3]{J}) 
C. U. Jensen proved the following: Let $R$ denote a semi local ring and $M$ a finitely generated $R$-module. 
Then $M$ is complete if and only if $\Ext_R^1(F,M) = 0$ 
for all flat (resp. countably generated flat) $R$-modules $F$. A different proof 
for the vanishing of $\Ext_R^i(F,M)$ for all $i\geq 1$,  all flat $R$-modules and $M$ a complete 
$R$-module follows by results of R.-O. Buchweitz and H. Flenner (see \cite[Theorem 2]{BF}).

Moreover another criterion for $I$-adic completeness was shown by 
A. Frankild and S. Sather-Wagstaff. Let $I \subset R$ an ideal contained in the Jacobson radical of $R$. 
Let $M$ be a finitely generated $R$-module. In their paper (see \cite{FSW}) they proved that 
$M$ is $I$-adically complete if and only if $\Ext^i_R(\hat{R}^I, M) = 0$ for 
all $1 \leq i \leq \dim_R M$, where $\hat{R}^I$ denotes the $I$-adic completion 
of $R$.

Whence, the main result of the present paper is the construction of a simple flat test module $F$ for the 
completeness of $M$ in the $I$-adic topology in terms of the vanishing of $\Ext_R^1(F,M)$. This generalizes 
the case of the maximal ideal proved in \cite{pS4}. Instead of Matlis Duality used in \cite{pS4} (not 
available in the case of $I$-adic topology) here we use some homological techniques.

\section{Preliminary Results} 
In the following $R$ denotes  a commutative Noetherian ring.  For an ideal 
$I \subset R$ we consider the $I$-adic completion. For an $R$-module $M$ it is defined by 
$\varprojlim{}_{\alpha} M/I^{\alpha}M = \hat{M}^I$. We say that $M$ is $I$-adically 
complete whenever the natural homomorphism $\tau = \tau_M^I : M \to \hat{M}^I$ (induced 
by the natural surjections $M \to M/I^{\alpha}M$) is an isomorphism. It 
implies that $M$ is $I$-separated, i.e. $\cap_{\alpha} I^{\alpha} M = 0$. We begin with a 
few preparatory results needed later on. 

\begin{proposition} \label{2.1} Let $I \subset R$ denote an ideal. Let $M$ denote 
an  $R$-module such that $M \otimes_R R/I = 0$. Then $M \otimes_R X = 0$ for any 
$R$-module $X$ with  $\Supp_R X \subseteq V(I)$.  
\end{proposition}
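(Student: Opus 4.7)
The plan is to exploit the hypothesis $M \otimes_R R/I = 0$, which is equivalent to the identity $M = IM$, and show by an elementary element chase that every simple tensor $m \otimes x$ in $M \otimes_R X$ vanishes.

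First I would iterate the relation $M = IM$ to obtain $M = I^n M$ for every $n \geq 1$ (since $M = IM = I(IM) = I^2 M$, and so on). Next I need the standard fact that if $X$ is an arbitrary $R$-module with $\Supp_R X \subseteq V(I)$, then every element $x \in X$ is annihilated by some power of $I$. This reduces to the finitely generated case by writing $X = \varinjlim X_\alpha$ as a directed union of finitely generated submodules; for each finitely generated $X_\alpha$ one has $\Supp X_\alpha = V(\Ann_R X_\alpha) \subseteq V(I)$, so by the Noetherian hypothesis $I \subseteq \sqrt{\Ann_R X_\alpha}$, which gives $I^n X_\alpha = 0$ for some $n$.

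Combining these two observations, take any simple tensor $m \otimes x \in M \otimes_R X$. Choose $n$ with $I^n x = 0$, and use $M = I^n M$ to write $m = \sum_j a_j m_j$ with $a_j \in I^n$ and $m_j \in M$. Then
\[
m \otimes x = \sum_j (a_j m_j) \otimes x = \sum_j m_j \otimes (a_j x) = 0,
\]
since each $a_j x = 0$. Because every element of $M \otimes_R X$ is a finite sum of simple tensors, this yields $M \otimes_R X = 0$.

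I do not expect a serious obstacle here; the only point that requires a word of care is the reduction from arbitrary $X$ to the statement that every element is killed by a power of $I$, which hinges on the Noetherian hypothesis on $R$ applied to the finitely generated submodules of $X$.
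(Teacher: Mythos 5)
Your proof is correct and follows essentially the same route as the paper: both arguments rest on $M=I^nM$ for all $n$ together with the reduction to finitely generated submodules of $X$, each of which is killed by a power of $I$ thanks to the Noetherian hypothesis. The only difference is cosmetic — the paper finishes by passing the tensor identity $(M\otimes_R R/I^{\alpha})\otimes_R X=0$ through a direct limit, while you finish with an element chase on simple tensors.
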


\begin{proof} The assumption $M \otimes_R R/I = 0$ implies that $M = I^{\alpha} M$ 
for all $\alpha \geq 1$, or in other words $M \otimes_R R/I^{\alpha} = 0$. First of all let 
$X$ be a finitely generated $R$-module. Then $I^{\alpha}X = 0$ for a certain $\alpha \in \mathbb{N}$.  
Therefore 
\[
M \otimes_R X = M \otimes_R X/I^{\alpha}X = (M \otimes_R R/I^{\alpha})\otimes_R X = 0.
\]
If $X$ is not necessarily a finitely generated $R$-module, then $X \simeq \varinjlim X_{\alpha}$ 
for a direct system of finitely generated submodules $X_{\alpha} \subset X$ ordered by inclusions. 
Whence $\Supp_R X_{\alpha} \subseteq V(I)$ and 
\[
M \otimes_R X = \varinjlim (M \otimes_R X_{\alpha}) = 0,
\] 
as required.
\end{proof} 

Let $I \subset R$ denote an ideal. For an $R$-module $X$ the left derived 
functors $\lam_i(X), i \in \mathbb{Z}, $ are defined by 
$H_i(\varinjlim{}_{\alpha} (R/I^{\alpha} \otimes_R F_{\cdot}))$, where $F_{\cdot}$ 
denotes a flat resolution of $X$. The functors $\lam_i(\cdot)$ were first 
systematically studied by Greenlees and May (see \cite{GM}) and by A.-M. Simon 
(see \cite{amS}) and more recently in \cite{ALL} and \cite{pS2}. Note that there 
is a natural surjective homomorphism $\lam_0(X) \to \hat{X}^I$. Since $R$ is Noetherian  
it follows that $M/I^{\alpha}M \simeq \hat{M}^I/I^{\alpha}\hat{M}^I$ 
for all $\alpha \geq 1$ for any $R$-module $M$ (see \cite[Section 2.2]{jS}). 
Whence the completion $\hat{M}^I$ is $I$-adically complete.

In the following we will discuss the assumption on the $R$-module $M$ in 
Proposition \ref{2.1}. It provides a certain kind of a Nakayama Lemma. Its proof 
is known (see \cite[5.1]{amS} and also \cite[1.3, Lemma (ii)]{amS2}).  Here we add 
these arguments. 

\begin{proposition} \label{2.2} Let $I$ denote an ideal in a commutative Noetherian 
ring $R$. Let $M$ denote an arbitrary $R$-module. Then $M \otimes_R R/I = 0$ if and only 
if $\hat{M}^I = 0$ and if and only if $\lam_0(M) = 0$.
\end{proposition}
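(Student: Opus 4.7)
The equivalence $M \otimes_R R/I = 0 \iff \hat{M}^I = 0$ is elementary. If $M = IM$, then by iteration $M = I^{\alpha}M$ for every $\alpha \geq 1$, so $M/I^{\alpha}M = 0$ and hence $\hat{M}^I = 0$. Conversely, the identification $\hat{M}^I/I^{\alpha}\hat{M}^I \simeq M/I^{\alpha}M$ recalled just above the statement (taken at $\alpha = 1$) gives $M/IM = 0$ the moment $\hat{M}^I = 0$. The implication $\lam_0(M) = 0 \Rightarrow \hat{M}^I = 0$ is immediate from the natural surjection $\lam_0(M) \twoheadrightarrow \hat{M}^I$ already noted in the excerpt. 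Only the reverse direction $M = IM \Rightarrow \lam_0(M) = 0$ requires genuine work.

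For this step I would fix a flat resolution $\cdots \to F_1 \xrightarrow{d} F_0 \to M \to 0$, so that $\lam_0(M)$ is the cokernel of the induced map
\[
\Lambda^I(d)\colon \varprojlim_{\alpha} F_1/I^{\alpha}F_1 \longrightarrow \varprojlim_{\alpha} F_0/I^{\alpha}F_0.
\]
The task thus reduces to showing that $\Lambda^I(d)$ is surjective. The key input is the identity $F_0 = d(F_1) + IF_0$: indeed, since $F_0 \twoheadrightarrow M = IM$, every $f \in F_0$ maps into $IM$, so after subtracting some $g \in IF_0$ we land in $\ker(F_0 \to M) = d(F_1)$. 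This is the module-theoretic Nakayama-shadow of Proposition \ref{2.1}.

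Given a compatible sequence $(y_{\alpha}) \in \varprojlim F_0/I^{\alpha}F_0$, I would construct $w_{\alpha} \in F_1$ inductively with $d(w_{\alpha}) \equiv y_{\alpha} \pmod{I^{\alpha}F_0}$ and $w_{\alpha+1} \equiv w_{\alpha} \pmod{I^{\alpha}F_1}$, producing an element $(w_{\alpha}) \in \varprojlim F_1/I^{\alpha}F_1$ that lifts $(y_{\alpha})$. The base case is immediate from $F_0 = d(F_1) + IF_0$; at the inductive step the error $y_{\alpha+1} - d(w_{\alpha})$ lies in $I^{\alpha}F_0$ (combining the compatibility of $(y_{\alpha})$ with the induction hypothesis), so I would write it as $\sum_j a_j e_j$ with $a_j \in I^{\alpha}$ and $e_j \in F_0$, decompose each $e_j = d(v_j) + z_j$ with $z_j \in IF_0$ via the key identity, and set $w_{\alpha+1} := w_{\alpha} + \sum_j a_j v_j$. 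The $I^{\alpha}$-coefficients do double duty: they confine the correction $\sum a_j v_j$ to $I^{\alpha}F_1$ (preserving inverse-limit compatibility) while forcing the new error $\sum a_j z_j$ into $I^{\alpha+1}F_0$ (sharpening the approximation). The main obstacle is precisely this coupled bookkeeping: one must correct at the $I^{\alpha}F_1$ level and simultaneously improve the approximation to $I^{\alpha+1}F_0$, and the flexibility in the decomposition $F_0 = d(F_1) + IF_0$ is exactly what lets both happen at once.
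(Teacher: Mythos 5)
Your proof is correct, and in one important respect it is more complete than the paper's own. For the elementary equivalences you argue exactly as the paper does: $M = IM$ forces $M/I^{\alpha}M = 0$ for all $\alpha$, hence $\hat{M}^I = 0$; conversely $\hat{M}^I/I\hat{M}^I \simeq M/IM$ gives the reverse direction; and the natural surjection $\lam_0(M) \twoheadrightarrow \hat{M}^I$ gives $\lam_0(M) = 0 \Rightarrow \hat{M}^I = 0$. The difference lies in the remaining implication. The paper's written argument only adds $\lam_0(M) = 0 \Rightarrow M/IM = 0$ (via a free presentation: $\hat{F}_1^I \to \hat{F}_0^I$ onto implies $F_1/IF_1 \to F_0/IF_0$ onto), which is logically redundant given the implications already established; the direction actually needed to close the cycle, namely $M \otimes_R R/I = 0 \Rightarrow \lam_0(M) = 0$, is delegated to the cited results of Simon (see \cite[5.1]{amS} and \cite[1.3]{amS2}). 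You correctly identify this as the only step requiring genuine work and prove it from scratch: from $M = IM$ and exactness you extract the identity $F_0 = d(F_1) + IF_0$, then lift a coherent sequence in $\varprojlim F_0/I^{\alpha}F_0$ by successive approximation, the $I^{\alpha}$-coefficients simultaneously confining the correction to $I^{\alpha}F_1$ and pushing the error into $I^{\alpha+1}F_0$. This is essentially the Nakayama-for-completions argument of Simon's papers, so your version buys self-containedness where the paper buys brevity by citation. Your bookkeeping is sound: elements of $I^{\alpha}F_0$ are finite sums $\sum a_j e_j$ with $a_j \in I^{\alpha}$, and the congruences $w_{\alpha+1} \equiv w_{\alpha} \pmod{I^{\alpha}F_1}$ do define an element of $\varprojlim F_1/I^{\alpha}F_1$ mapping onto $(y_{\alpha})$, so $\Lambda^I(d)$ is surjective and $\lam_0(M) = 0$.
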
 

\begin{proof} If $M \otimes_R R/I = 0$, then $M/I^{\alpha}M = 0$ for all $\alpha \geq 1$
and $\hat{M}^I = \varprojlim{}_{\alpha} M/I^{\alpha}M = 0$. Conversely, suppose that 
$\hat{M}^I = 0$. Then $0 = \hat{M}^I/I\hat{M}^I \simeq M/IM$, as required. If $\lam_0(M) = 0$, 
then clearly $\hat{M}^I = 0$. Let $F_1 \to F_0 \to M \to 0$ denote a free presentation 
of $M$. If $\lam_0(M) = 0$, then 
$\hat{F}_1^I \to \hat{F}_0^I$ is onto. Therefore 
$F_1/IF_1 \to F_0/IF_0$ is onto too, i.e. $M/IM = 0$ as required.  
\end{proof} 

In the following let $\xx = \{x_1,\ldots,x_r\}$ denote a system of elements of $R$ 
such that $\Rad \xx R = \Rad I$. Then we consider the \v{C}ech complex $\Cech$ 
of the system $\xx$ (see e.g. \cite[Section 3]{pS2}). Note that this is the same 
as the complex $K_{\infty}(\underline{x})$  introduced in \cite{ALL} as the 
direct limit of the Koszul complexes $K_{\cdot}(\underline{x}^{\underline{n}})$. 
There is a natural morphism $\Cech \to R$. 
 
In our context the importance of the \v{C}ech complex  
is the following result. It allows the expression of the right derived 
functors $\lam_i(\cdot)$ in different terms.

\begin{theorem} \label{2.3} Let $I \subset R$ denote an ideal of a commutative Noetherian 
ring $R$. Let $M$ denote an $R$-module. Then there are natural isomorphisms 
\[
\lam_i(M) \simeq %H_i(\Hom_R(L_{\xx},M)) \simeq 
H_i(\Hom_R(\Cech,E_R(M)^{\cdot})) 
%\simeq H_i(\Hom_R(L_{\xx},E_R(M)^{\cdot}))
\]
for all $i \in \mathbb{Z}$, where $E_R(M)^{\cdot}$ denotes an injective resolution of $M$.
\end{theorem}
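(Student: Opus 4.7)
The plan is to identify the complex $\Hom_R(\Cech, E_R(M)^{\cdot})$ with a representative of the derived completion $\Llam(M)$ in the derived category $D(R)$. The key input, which is already available from \cite{GM}, \cite{amS}, \cite{ALL} and \cite{pS2}, is the Greenlees--May style natural isomorphism
\[
\Llam(M) \simeq \RHom_R(\Cech, M)
\]
in $D(R)$. Intuitively, this reflects the facts that ${\rm R}\gam(R)$ is represented by the \v{C}ech complex $\Cech$ and that derived $I$-adic completion is right-adjoint, in the derived sense, to derived $I$-torsion.

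With that isomorphism in hand, the theorem reduces to selecting a convenient representative for $\RHom_R(\Cech, M)$. Each component $R_{x_{i_1}\cdots x_{i_k}}$ of $\Cech$ is a localization of $R$, hence a flat $R$-module, so $\Cech$ is a bounded complex of flat $R$-modules. Consequently, for any injective resolution $M \to E_R(M)^{\cdot}$, the total complex of the double complex $\Hom_R(\Cech, E_R(M)^{\cdot})$ represents $\RHom_R(\Cech, M)$ in $D(R)$. Taking the $i$-th homology (with the author's indexing convention) then yields the asserted natural isomorphism $\lam_i(M) \simeq H_i(\Hom_R(\Cech, E_R(M)^{\cdot}))$ for all $i \in \mathbb{Z}$. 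Naturality in $M$ is inherited from the naturality of the Greenlees--May duality together with the uniqueness up to homotopy of injective resolutions.

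The main obstacle, should one wish to give a fully self-contained proof rather than invoke the Greenlees--May duality directly, lies in producing the derived isomorphism $\Llam(M) \simeq \RHom_R(\Cech, M)$ from first principles. A concrete route goes via the telescope complex $\Tel(\xx)$, which is a bounded complex of flat $R$-modules quasi-isomorphic to $\Cech$ and manufactured as a mapping cone built from the direct systems $\{R \xrightarrow{x_i^n} R\}_n$. Applying $\Hom_R(-, E_R(M)^{\cdot})$ turns those direct limits into inverse limits, and the adjunction $\Hom_R(\varinjlim A_\alpha, -) \simeq \varprojlim \Hom_R(A_\alpha, -)$, combined with the Mittag-Leffler property of the system $\{R/I^{\alpha} \otimes_R F_{\cdot}\}$ attached to a flat resolution $F_{\cdot} \to M$, identifies the resulting complex with the one computing $\lam_i(M)$. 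A degree-by-degree comparison then completes the argument.
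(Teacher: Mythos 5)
Your proposal is correct and takes essentially the same route as the paper: the paper's entire proof of Theorem \ref{2.3} is a citation of \cite[Section 4]{pS2} (with \cite{ALL} as the derived-category reference), i.e.\ exactly the Greenlees--May type identification of $\lam_i(M)$ with the homology of $\RHom_R(\Cech,M)$ that you take as your key input, after which the passage to the representative $\Hom_R(\Cech,E_R(M)^{\cdot})$ is the same bookkeeping. Two minor quibbles: the reason this complex computes $\RHom_R(\Cech,M)$ is that $E_R(M)^{\cdot}$ is a left-bounded complex of injectives (hence K-injective), not the flatness of $\Cech$; and your sketched self-contained route via the telescope complex silently uses the weak proregularity of $\xx$, which is exactly where the Noetherian hypothesis enters in \cite{pS2}.
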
 

\begin{proof} This statement is one of the main results of \cite[Section 4]{pS2}. In the 
formulation here we do not use the technique of derived functors and derived categories. 
(For a more advanced exposition based on derived categories and derived functors the 
interested reader might also consult \cite{ALL}.)
\end{proof} 

Let us summarize a few basic results on completions used in the sequel. The following results are 
well-known. 

\begin{proposition} \label{2.5} Let $J \subset I$ denote ideals of a commutative Noetherian ring.
Let $\xx = \{x_1,\ldots,x_r\}$ denote a system of elements of $R$. Let $M$ denote an $R$-module. 
\begin{itemize}
\item[(a)] Suppose that $M$ is $I$-adic complete. Then it is also $J$-adic complete. 
\item[(b)] Suppose that $\Rad I = \Rad \xx R$. Assume  that $M$  is complete in the $x_iR$-adic topology 
for $i = 1,\ldots,r$ and $M$ is $I$-separated. Then $M$ is $I$-adic complete.
\end{itemize} 
\end{proposition}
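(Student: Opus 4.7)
The plan is to handle (a) and (b) in turn, using only manipulations of inverse limits together with separation/closedness properties.

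For (a), since $J \subseteq I$ we have $J^\alpha M \subseteq I^\alpha M$ for all $\alpha$, so every $J$-adic Cauchy sequence in $M$ is also $I$-adic Cauchy. Given $\hat m \in \hat{M}^J$ represented by a sequence $(m_n)$ with $m_{n+1}-m_n \in J^n M$, the $I$-completeness of $M$ produces an $I$-adic limit $m \in M$. To verify that $m$ represents $\hat m$ in $\hat{M}^J$, I must show $m - m_n \in J^n M$ for every $n$. Writing $m - m_n = (m - m_k) + (m_k - m_n)$ where $m_k - m_n \in J^n M$ and $m - m_k$ can be made to lie in arbitrarily high powers of $I$ by taking $k\gg n$, I obtain $m - m_n \in \bigcap_N (J^n M + I^N M)$. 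Hence what I need is $I$-adic closedness of $J^n M$ in $M$. Combined with the automatic $J$-separation $\bigcap_n J^n M \subseteq \bigcap_n I^n M = 0$, this forces $M \to \hat{M}^J$ to be an isomorphism.

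For (b), the radical equality $\Rad I = \Rad \xx R$ makes the filtrations $\{I^\alpha\}$ and $\{(\xx R)^\alpha\}$ cofinal, so they induce identical completions and I may assume $I = (x_1, \dots, x_r)$. I then induct on $r$; the base case $r = 1$ is the hypothesis. For $r \geq 2$, set $I' = (x_1,\dots,x_{r-1})$; since $I$-separation of $M$ forces $I'$-separation and $M$ is $x_iR$-complete for $i < r$, the inductive hypothesis yields $I'$-completeness of $M$. It remains to establish the two-ideal step: if $M$ is complete with respect to both $J_1 = I'$ and $J_2 = x_r R$ and is $(J_1+J_2)$-separated, then $M$ is $(J_1+J_2)$-complete. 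Using that $(J_1+J_2)^n$ is cofinal with $J_1^n + J_2^n$ and that the diagonal is cofinal in the bidirected system $(a,b) \mapsto J_1^a + J_2^b$, I rewrite
\[
\hat{M}^{J_1+J_2} \;=\; \varprojlim_n M/(J_1^n M + J_2^n M) \;=\; \varprojlim_n \widehat{(M/J_2^n M)}^{J_1},
\]
and invoke $J_1$-completeness of each quotient $M/J_2^n M$ to collapse the iterated limit to $\varprojlim_n M/J_2^n M = \hat{M}^{J_2} = M$.

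The main obstacle in both parts is the same underlying point: establishing $I$-adic closedness of submodules of the form $J^n M$ in (a), and $J_1$-adic completeness of the quotients $M/J_2^n M$ in (b). For finitely generated $M$ both follow immediately from Artin--Rees, but no finite generation hypothesis is available here; the combination of $I$-separation with the Noetherian assumption on $R$ must therefore carry the burden. Since the author records these as well-known, the essential content of the proof lies in organizing the inverse-limit manipulations so that completeness with respect to several ideals can be combined into completeness with respect to their sum.
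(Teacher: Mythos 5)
Your argument for both parts reduces to statements that you name but never prove, and these are precisely where the difficulty lies. In (a) you need $J^nM$ to be closed in the $I$-adic topology of $M$; in (b) you need each quotient $M/J_2^nM$ to be $J_1$-adically complete. Neither follows from the hypotheses by anything you write, and the closing appeal to ``the combination of $I$-separation with the Noetherian assumption on $R$ must carry the burden'' is an acknowledgement of the gap, not an argument. The principle you invoke in (b) is in fact false for modules that are not finitely generated: a quotient of a complete module need not even be separated. For example $\prod_{\mathbb{N}}\mathbb{Z}_p$ is $p$-adically complete, but the image of $(1,p,p^2,\ldots)$ in $\prod_{\mathbb{N}}\mathbb{Z}_p\big/\bigoplus_{\mathbb{N}}\mathbb{Z}_p$ lies in $\cap_n p^n\bigl(\prod/\bigoplus\bigr)$ without being zero, so the quotient is not separated and hence not complete. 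Whether the specific quotients $M/J_2^nM$ behave better under your hypotheses is exactly the content you would still have to supply, and Artin--Rees is not available since $M$ is arbitrary.

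The missing idea is that one never needs closedness of submodules or completeness of quotients here: it suffices to prove surjectivity of $\tau: M \to \hat{M}^I$ and let the standing $I$-separation hypothesis give injectivity. The paper's proof of (b) does this by a series-splitting device. Since the topologies agree, $\hat{M}^I \simeq \varprojlim_{\alpha} M/(x_1^{\alpha},\ldots,x_r^{\alpha})M$, so an element $m$ is represented by a Cauchy sequence whose increments decompose as $\sum_{i=1}^r x_i^{\alpha}m_{\alpha,i}$. Each single-variable series $\sum_{\alpha} x_i^{\alpha}m_{\alpha,i}$ is $x_iR$-adically Cauchy, hence by surjectivity of $M \to \hat{M}^{x_iR}$ it sums to some $m_i \in M$ congruent to its $\alpha$-th partial sum modulo $x_i^{\alpha}M$; then $m' = \sum_i m_i$ satisfies $\tau(m') = m$. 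No quotient ever has to be complete and no submodule ever has to be closed. (For part (a) the paper simply cites Strooker; your factor-the-tail idea can be made to work for a principal ideal $xR \subseteq I$, but for general $J$ it again runs into the closure of $J^nM$.) To repair your write-up, either prove the closedness statements you rely on or restructure the surjectivity argument along these lines.
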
 

\begin{proof} For the proof of (a) see \cite[2.2.6]{jS}. 
 Let $\hat{M}^I$ denote the $I$-adic completion of $M$.  Because the $I$-adic topology and the topology defined by $\{(x_1^{\alpha},\ldots,x_r^{\alpha})R\}_{\alpha \geq 0}$ are equivalent it follows 
$\hat{M}^I \simeq \varprojlim{}_{\alpha}M/(x_1^{\alpha},\ldots,x_r^{\alpha})M$. Therefore an element $m \in 
\hat{M}^I$ has the form $m = \sum_{\alpha \geq 0} \sum_{i=1}^r x_i^{\alpha} m_{\alpha,i}$ with 
$m_{\alpha,i} \in M$. Since $M$ is $x_iR$-adically complete $M \simeq \varprojlim{}_{\alpha} 
M/x_i^{\alpha}M$ and $m_i = \sum_{\alpha} x^{\alpha}_i m_{\alpha,i}$ for some $m_i \in M$. Then 
$m' = \sum_{i=1}^r m_i \in M$ maps to $m$ under $\tau : M \to \hat{M}^I$. Because $M$ is $I$-separated it is $I$-adically complete. 
This proves (b).
%The proof of (b) follows by a slight modification of the arguments of the proof of 
%\cite[Proposition 2.5]{pS4}. 
\end{proof} 

The following result is helpful in order to compute projective limits of $\Ext$-modules. 
Here $\varprojlim{}^1$ denotes the first right derived functor of the projective limit 
(see \cite{cW}).

Let $M$ denote an $R$-module and $x \in R$. We consider the following projective system 
$\{M,x\}$, where $M_{\alpha} = M$ for all $\alpha \in \mathbb{N}$ and the transition map 
$M_{\alpha +1} \stackrel{x}{\to} M_{\alpha}$ is the multiplication by $x$. 

\begin{lemma} \label{2.6} With the previous notation it follows that 
\[
\varprojlim{}^i \{M,x\} \simeq \Ext^i_R(R_x,M) \text{ for } i = 0,1 \text{ and } \Ext_R^i(R_x,M) = 0 \text{ for all } i \geq 2.
\]
%Moreover, $\Hom_R(R_x,M) \simeq \varprojlim \{M,x\}$. 
If $M$ is $xR$-separated, then $\varprojlim \{M,x\} =  \Hom_R(R_x, M) = 0$.
\end{lemma}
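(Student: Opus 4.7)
The plan is to build $R_x$ as the direct limit of the sequence $R \xrightarrow{x} R \xrightarrow{x} R \xrightarrow{x} \cdots$ and use the telescope short exact sequence to obtain a flat resolution of $R_x$ of length one. Concretely, I would write
\[
0 \longrightarrow \bigoplus_{\alpha \geq 0} R \stackrel{\psi}{\longrightarrow} \bigoplus_{\alpha \geq 0} R \longrightarrow R_x \longrightarrow 0,
\]
where $\psi$ sends the $\alpha$-th basis vector $e_\alpha$ to $e_\alpha - x\,e_{\alpha+1}$. Injectivity of $\psi$ is immediate by comparing leading indices in a finite sum, and the cokernel identifies with the direct limit, i.e. with $R_x$. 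Since this is a flat (in fact, free) resolution of length one, $\Ext^i_R(R_x,M)=0$ for every $i\geq 2$, which takes care of the last assertion in the first displayed formula.

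Next I would apply $\Hom_R(-,M)$. Using $\Hom_R(\bigoplus_\alpha R,M)\cong\prod_\alpha M$, I obtain the four-term exact sequence
\[
0 \to \Hom_R(R_x,M) \to \prod_{\alpha\geq 0} M \stackrel{\psi^\ast}{\longrightarrow} \prod_{\alpha \geq 0} M \to \Ext^1_R(R_x,M) \to 0,
\]
where a direct computation shows that $\psi^\ast$ carries $(m_0,m_1,m_2,\ldots)$ to $(m_0 - xm_1,\,m_1 - xm_2,\,m_2 - xm_3,\ldots)$. But this is precisely the map whose kernel computes the projective limit of the tower $\{M,x\}$ and whose cokernel computes $\varprojlim{}^1\{M,x\}$, by the standard description of derived projective limits for $\mathbb{N}$-indexed systems (see \cite{cW}). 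Reading off kernel and cokernel yields the claimed isomorphisms $\Hom_R(R_x,M)\simeq\varprojlim\{M,x\}$ and $\Ext^1_R(R_x,M)\simeq \varprojlim{}^1\{M,x\}$.

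For the final assertion, assume $M$ is $xR$-separated. Given a coherent sequence $(m_\alpha)\in\varprojlim\{M,x\}$, the relation $m_\alpha = x m_{\alpha+1}$ iterated $\beta$ times gives $m_\alpha = x^\beta m_{\alpha+\beta}$, so $m_\alpha \in \bigcap_{\beta\geq 0} x^\beta M = 0$. Hence $\varprojlim\{M,x\}=0$, and by the previous step $\Hom_R(R_x,M)=0$ as well.

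The only delicate point is keeping the conventions consistent: the direction of the transition maps in the tower $\{M,x\}$ must match the direction of the limit system defining $R_x$, so that dualization of the telescope really produces the expected $\varprojlim$-defining map. Once indices are lined up correctly, everything else is bookkeeping.
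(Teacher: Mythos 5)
Your proof is correct and follows essentially the same route as the paper: the paper obtains the four-term exact sequence by citing the general Milnor-type sequence for $\Ext_R^i(\varinjlim M_\alpha, N)$ (Lemma \ref{4.1}, i.e.\ \cite[Lemma 2.6]{pS3}) applied to the system $\{R,x\}$ with $\varinjlim\{R,x\} \simeq R_x$, and that general lemma is proved by precisely the telescope resolution you write out explicitly. Your version simply inlines the proof of the cited lemma, which makes the argument self-contained but does not change its substance; the identification of $\psi^\ast$ with the map whose kernel and cokernel compute $\varprojlim$ and $\varprojlim^1$ of $\{M,x\}$, and the separatedness argument at the end, are all as in the paper.
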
 

\begin{proof} Let $\{R,x\}$ denote the direct system with $R_{\alpha} = R$ for all $\alpha \in \mathbb{N}$ and the transition map 
$R_{\alpha} \stackrel{x}{\to} R_{\alpha + 1}$ is the multiplication by $x$. Then $R_x \simeq \varinjlim{}_{\alpha} \{R,x\}$ as it is well 
known. Therefore, $\Hom_R(\{R,x\},M) \simeq \{M,x\}$. Now, by \cite[Lemma 2.6]{pS3} (see also Lemma \ref{4.1})  there are short 
exact sequences 
\[
0 \to \varprojlim{}^1 \{\Ext^{i-1}_R(R, M), x\} \to \Ext_R^i(R_x, M) \to \varprojlim \{\Ext_R^i(R,M),x\}
\to 0
\]
for all $i \in \mathbb{Z}$. Whence the results of the first part follow. 
If $M$ is $xR$-separated, the vanishing of $\Hom_R(R_x,M)$ is easy to see.
\end{proof}

The previous result is a slight modification of Lemma \cite[Lemma 2.7]{pS4}. 

\begin{remark} \label{2.7} A morphism of two complexes $X^{\cdot} \to Y^{\cdot}$ of $R$-modules is called 
a quasi-isomor\-phism (or homology isomorphism) whenever the induced homomorphisms on the cohomo\-logy modules 
$H^i(X^{\cdot}) \to H^i(Y^{\cdot})$ 
are isomorphisms for all $i \in \mathbb{Z}$. For the definitions of $\Hom_R(X^{\cdot}, Y^{\cdot})$ and 
$X^{\cdot} \otimes_R Y^{\cdot}$ we refer to \cite{cW} and \cite{AF}. For the details on homological algebra of 
complexes of modules we refer to \cite{AF}.
\end{remark}

\section{On \texorpdfstring{$I$}--adic Completions}
Let $I \subset R$ denote an ideal of a commutative Noetherian ring $R$. 
In the first main result we shall prove a vanishing result for a 
certain $\Ext$-module for an $I$-adic complete $R$-module.

\begin{theorem} \label{3.1} Let $M$ denote an arbitrary $R$-module. 
Let $F$ denote a flat $R$-module satisfying $F \otimes_R R/I = 0$. 
Then $\Ext^i_R(F,\hat{M}^I) = 0$ for all $i \in \mathbb{Z}$.  
\end{theorem}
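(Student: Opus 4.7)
My strategy is to use Theorem~\ref{2.3} to rewrite $\Ext^i_R(F,\hat M^I)$ via the \v{C}ech complex $\Cech$, and then to annihilate the resulting complex using the hypothesis $F\otimes_R R/I=0$ through Proposition~\ref{2.1}. Set $N:=\hat M^I$ and let $E^{\cdot}$ denote an injective resolution of $N$.

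The first step is to establish that $N$ is derived $I$-complete, i.e., that the canonical morphism $N\to\Hom_R(\Cech,E^{\cdot})$ induced by the augmentation $\Cech\to R$ is a quasi-isomorphism. In the language of Theorem~\ref{2.3} this amounts to $\lam_0(N)\cong N$ and $\lam_i(N)=0$ for $i\geq 1$. The natural surjection $\lam_0(N)\twoheadrightarrow N$ recalled in Section~2 gives the first identification once one rules out a kernel, and the higher vanishings follow by a standard argument: each $N/I^{\alpha}N\cong M/I^{\alpha}M$ is annihilated by $I^{\alpha}$ and hence trivially derived $I$-complete, while derived completion commutes with the Mittag--Leffler limit $N=\varprojlim_\alpha N/I^\alpha N$ (compare \cite{pS2}).

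Next, observe that $\Hom_R(\Cech,E^{\cdot})$ is itself a bounded-below complex of injectives, since $\Hom_R(R_{x_{i_1}\cdots x_{i_p}},E^q)$ is injective whenever $E^q$ is (by flatness of the localization). Consequently, applying $\Hom_R(F,-)$ preserves the quasi-isomorphism from the previous step, and together with $\otimes$--$\Hom$ adjunction one obtains
\[
\Hom_R(F,E^{\cdot})\;\simeq\;\Hom_R\bigl(F\otimes_R\Cech,\,E^{\cdot}\bigr).
\]
The key now is that $F\otimes_R\Cech$ is acyclic: since $F$ is flat, $H^i(F\otimes_R\Cech)\cong F\otimes_R H^i(\Cech)=F\otimes_R H^i_I(R)$, and each local cohomology module $H^i_I(R)$ is supported in $V(I)$, so Proposition~\ref{2.1} forces $F\otimes_R H^i_I(R)=0$.

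A standard spectral-sequence argument (each row $\Hom_R(-,E^q)$ is exact for injective $E^q$) then shows that $\Hom_R(F\otimes_R\Cech,E^{\cdot})$ is acyclic, yielding $\Ext^i_R(F,\hat M^I)=0$ for all $i$. The principal obstacle I anticipate is the first step, the derived $I$-completeness of $\hat M^I$; the remaining manipulations are formal consequences of adjunction, Proposition~\ref{2.1}, and the standard behaviour of $\Hom$ into bounded-below complexes of injectives.
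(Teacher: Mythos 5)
Your proposal is correct and follows essentially the same route as the paper: establish that $E^{\cdot}\to\Hom_R(\Cech,E^{\cdot})$ is a quasi-isomorphism because $\hat M^I$ is $I$-adically complete with $\lam_i(\hat M^I)=0$ for $i>0$, then use adjunction, flatness of $F$, and Proposition~\ref{2.1} to kill $F\otimes_R\Cech$ and hence $\Hom_R(F\otimes_R\Cech,E^{\cdot})$. The only difference is that the paper disposes of the first step by citing Simon's result \cite[5.2]{amS} rather than sketching the Mittag--Leffler argument you outline.
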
 

\begin{proof} For the proof we use the techniques summarized in Theorem \ref{2.3}. To this end we fix 
a few notation. Let $\xx = \{x_1,\ldots,x_r\}$ denote a system of elements of $R$ such that $\Rad \xx R 
= \Rad I$. Let $\Cech$ denote the \v{C}ech complex with respect to $\xx$ as defined in  \cite[Section 3]{pS2}. 
It is a bounded complex of flat $R$-modules with a natural morphism $\Cech \to R$ 
of complexes. 
Let $E^{\cdot}$ denote an injective resolution of $\hat{M}^I$ as an $R$-module.  
By applying $\Hom_R(\cdot,E^{\cdot})$ it induces a morphism of complexes $E^{\cdot} \to 
\Hom_R(\Cech, E^{\cdot})$. 

Next we investigate the complex 
$\Hom_R(\Cech,E^{\cdot})$. It is a left bounded complex of injective $R$-modules. Moreover, 
by virtue of Theorem \ref{2.3} it follows that $\lam_i(\hat{M}^I) \simeq H_i(\Hom_R(\Cech,E^{\cdot}))$ 
for all $i \in \mathbb{Z}$. 
Therefore $ H_i(\Hom_R(\Cech,E^{\cdot})) = 0$ for all $i \not= 0$ and $ H_0(\Hom_R(\Cech,E^{\cdot})) 
\simeq \hat{M}^I$ since $\hat{M}^I$ is $I$-adic complete and $\lam_i(\hat{M}^I) = 0$ for all $i > 0$ 
(see \cite[5.2]{amS}). Therefore, the morphism 
$E^{\cdot} \to \Hom_R(\Cech, E^{\cdot})$ is a quasi-isomorphism.

Now we consider the complexes
\[
\Hom_R(F, \Hom_R(\Cech,E^{\cdot})) \simeq \Hom_R(F \otimes_R \Cech,E^{\cdot}).
\]
We claim that they are  homologically trivial complexes. For that reason it will be enough to show that $F \otimes_R \Cech$ 
is homologically trivial since $E^{\cdot}$ is a bounded below complex of injective $R$-modules. 
Since $F$ is a flat $R$-module it yields 
that $H^i(F \otimes_R \Cech) \simeq F \otimes_R H^i(\Cech) =0$ for all $i \in \mathbb{Z}$. For 
the vanishing apply Proposition \ref{2.1} with $H^i(\Cech) \simeq 
H^i_I(R)$ for all $i \in \mathbb{Z}$ and $\Supp_R H^i_I(R) \subseteq V(I)$. 

As shown above  $E^{\cdot} \to \Hom_R(\Cech, E^{\cdot})$ is a quasi-isomorphism of left bounded complexes of 
injective $R$-modules. Applying the functor $\Hom_R(F, \cdot)$ induces a quasi-isomorphism 
\[
\Hom_R(F,E^{\cdot}) \to \Hom_R(F, \Hom_R(\Cech, E^{\cdot})).
\]
The complex  at the right is cohomologically trivial, while the cohomology of the complex at the left
is $\Ext_R^i(F,\hat{M}^I)$. Therefore, the $\Ext$-modules vanish, as required.
\end{proof}

In the following result we prove another behaviour of certain $\Ext$-modules in respect to the $I$-adic 
completion. 

\begin{theorem}  \label{3.3} Let $I \subset R$ denote an ideal. Let $M, X$ be 
arbitrary $R$-modules with $\tau : M \to \hat{M}^I$ the natural map.   Suppose that $\Supp_R X \subseteq V(I)$. Then 
\begin{itemize}
\item[(a)] $\Ext^i_R(X,\hat{M}^I/\tau(M)) = 0$ and 
\item[(b)]  the natural homomorphism $\Ext_R^i(X,M) \to \Ext_R^i(X, \hat{M}^I) $
is an isomorphism
\end{itemize}
 for all $i \in \mathbb{N}$ .
\end{theorem}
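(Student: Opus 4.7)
My plan is to prove (a) first and then deduce (b) via long exact sequences, after an analogous treatment of $\ker\tau$. Factor $\tau : M \to \hat{M}^I$ into
\[
0 \to K \to M \to \tau(M) \to 0, \qquad 0 \to \tau(M) \to \hat{M}^I \to N \to 0,
\]
where $K = \ker\tau = \bigcap_\alpha I^\alpha M$ and $N = \hat{M}^I/\tau(M)$. The key preliminary observation is that both $K$ and $N$ satisfy $IY = Y$ (equivalently $\hat{Y}^I = 0$ by Proposition \ref{2.2}). For $N$, this is immediate from the isomorphism $\hat{M}^I/I\hat{M}^I \cong M/IM$, which gives $\tau(M) + I\hat{M}^I = \hat{M}^I$. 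For $K$, it follows from Artin--Rees: every finitely generated $K' \subseteq K \subseteq M$ satisfies $K' = K' \cap I^\alpha M = I^{\alpha-c}(K' \cap I^c M) \subseteq I K'$ for $\alpha > c$, hence $K' = IK'$; taking the directed union over finitely generated $K' \subseteq K$ gives $IK = K$.

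For (a) I would adapt the \v{C}ech-complex technique from the proof of Theorem \ref{3.1}. Let $\xx = \{x_1,\ldots,x_r\}$ with $\Rad \xx R = \Rad I$, and let $E_N^{\cdot}$ be an injective resolution of $N$. Since $\Supp_R X \subseteq V(I)$, every element of $X$ is annihilated by a power of $I$, so each localization $X_{x_i}$ vanishes, and therefore $X \otimes_R \Cech \cong X$ concentrated in degree zero. Tensor--hom adjunction then gives
\[
\Hom_R(X, \Hom_R(\Cech, E_N^{\cdot})) \;\cong\; \Hom_R(X \otimes_R \Cech, E_N^{\cdot}) \;\cong\; \Hom_R(X, E_N^{\cdot}).
\]
By Theorem \ref{2.3} the complex $\Hom_R(\Cech, E_N^{\cdot})$ computes $\lam_i(N)$ in its cohomology. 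The plan is to use the long exact sequence of $\lam$-functors attached to $0 \to \tau(M) \to \hat{M}^I \to N \to 0$, together with the vanishing $\lam_i(\hat{M}^I) = 0$ for $i \geq 1$ (from \cite{amS}), to conclude that $\lam_i(N) = 0$ for all $i$. Then $\Hom_R(\Cech, E_N^{\cdot})$ is acyclic, the displayed identification above forces $\Hom_R(X, E_N^{\cdot})$ to be acyclic in positive degrees, and (a) follows.

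For (b) I combine (a) with the analogous vanishing $\Ext^i_R(X, K) = 0$, obtained by applying exactly the same argument to $K$ (using $IK = K$). The long exact sequences in $\Ext_R(X, -)$ attached to the two short exact sequences above then give
\[
\Ext^i_R(X, M) \xrightarrow{\;\sim\;} \Ext^i_R(X, \tau(M)) \xrightarrow{\;\sim\;} \Ext^i_R(X, \hat{M}^I)
\]
in the asserted range, which is (b).

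The main obstacle is propagating the vanishing $\lam_i(\hat{M}^I) = 0$ (for $i \geq 1$) to the corresponding vanishing of $\lam_i(N)$ and $\lam_i(K)$ for \emph{all} $i$, not just for $i = 0$ (which is immediate from $IN = N$ and $IK = K$ via Proposition \ref{2.2}). The mere condition $IY = Y$ does not force $\lam_i(Y) = 0$ in general (for example, $\lam_1(\mathbb{Z}_{p^\infty}) = \mathbb{Z}_p$ over $\mathbb{Z}$ with $I = (p)$). One must therefore exploit the specific realization of $N$ as the cokernel of $\tau$ and of $K$ as the intersection $\bigcap_\alpha I^\alpha M$, pushing the vanishing through the connecting homomorphisms of the $\lam$-long exact sequences; verifying this carefully is the technical heart of the proof.
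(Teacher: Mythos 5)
Your plan has a genuine gap at the step you yourself flag as ``the technical heart,'' and for part (b) that step cannot be repaired in the form you propose. You want to deduce $\Ext^i_R(X,N)=0$ and $\Ext^i_R(X,K)=0$ from the acyclicity of $\Hom_R(\Cech,E_Y^{\cdot})$ for $Y=N$ and $Y=K$, i.e.\ from $\lam_i(Y)=0$ for \emph{all} $i$. For $K=\Ker\tau$ this is false: take $R=\mathbb{Z}_{(p)}$, $I=(p)$ and $M=\mathbb{Z}_{p^\infty}$ (your own example); then $\hat{M}^I=0$, so $K=M=\mathbb{Z}_{p^\infty}$ and $\lam_1(K)=\mathbb{Z}_p\neq 0$. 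The same example shows the mechanism would prove too much: acyclicity of $\Hom_R(X,E_K^{\cdot})$ would force $\Hom_R(X,K)=0$, yet $\Hom_R(\mathbb{Z}/p,\mathbb{Z}_{p^\infty})\neq 0$. So the long exact sequences you need for (b) cannot be closed along this route. Two smaller problems: your Artin--Rees argument for $IK=K$ intersects $K'$ with $I^{\alpha}M$ inside an ambient module $M$ that is not finitely generated, so Artin--Rees does not apply as stated; and the proposed propagation $\lam_{i+1}(N)\simeq\lam_i(\tau(M))$ for $i\geq 1$ leads back to $\lam_i(M)$, which need not vanish for arbitrary $M$.

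The paper's proof sidesteps all of this. It uses the short exact sequence of complexes $0\to D_{\xx}[-1]\to\Cech\to R\to 0$, applies $\Hom_R(\cdot,E^{\cdot})$ to an injective resolution $E^{\cdot}$ of $M$ itself, identifies the middle term as an injective resolution of $\hat{M}^I$ and the right-hand term as representing $\hat{M}^I/\tau(M)$, and then kills $\Hom_R(L_{\cdot},\Hom_R(D_{\xx}[-1],E^{\cdot}))\simeq\Hom_R(L_{\cdot}\otimes_R D_{\xx},E^{\cdot})[1]$ by the purely termwise observation that $X\otimes_R D_{\xx}=0$, since every term of $D_{\xx}$ is a direct sum of localizations $R_{x_{i_0}\cdots x_{i_j}}$ and $X_{x_i}=0$. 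No vanishing of $\lam_i$ of the kernel or cokernel is ever needed, and both (a) and (b) fall out of the one sequence. Your observation that $X\otimes_R\Cech\simeq X$ is the right raw material; the fix is to isolate the positive-degree part $D_{\xx}$ of $\Cech$, where $X$ tensors to zero term by term, rather than to seek acyclicity of $\Hom_R(\Cech,E_Y^{\cdot})$ for the kernel and cokernel separately.
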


\begin{proof} Let $\xx = \{x_1,\ldots,x_r\}$ denote elements of $R$ such that $\Rad \xx R = \Rad I$. 
Let $\Cech$ denote the \v{C}ech complex with respect to $\xx$. Then there is a short exact sequence of 
complexes 
\[
0 \to D_{\xx}[-1] \to \Cech \to R \to 0,
\] 
where $D_{\xx}$ is the global \v{C}ech complex. 
That is $D_{\xx}^i = \Cech^{i+1}$ for $i \geq 0$ and $D_{\xx}^i = 0$ for $i < 0$. Now let 
$E^{\cdot}$ denote an injective resolution of $M$. By applying the functor $\Hom_R(\cdot, E^{\cdot})$ to the short 
exact sequence of complexes it provides a short exact sequence
\[
0 \to E^{\cdot} \to \Hom_R(\Cech,  E^{\cdot}) \to \Hom_R(D_{\xx}[-1],  E^{\cdot}) \to 0
\]
of left bounded complexes of injective $R$-modules. The complex in the middle is 
an injective resolution of $\hat{M}^I$ as follows by  \ref{2.3} and the fact that $\hat{M}^I$ is $I$-adic complete 
(see \cite[5.2]{amS}). Therefore the complex at the right is quasi-isomorphic to $\hat{M}^I/\tau(M)$ 
considered as a complex concentrated in homological degree zero. 
In order to prove the statement in (a) let $L_{\cdot}$ denote a projective resolution of $X$. Then 
$\Ext^i_R(X,\hat{M}^I/\tau(M)) \simeq H^i( \Hom_R(L_{\cdot}, \Hom_R(D_{\xx}[-1],E^{\cdot})))$ (see \cite{AF}) and 
it will be enough to show that the last modules vanish for all $i \in \mathbb{Z}$. 

To this end consider the isomorphism of complexes 
\[
\Hom_R(L_{\cdot}, \Hom_R(D_{\xx}[-1],E^{\cdot})) \simeq \Hom_R(L_{\cdot} \otimes_R D_{\xx}, E^{\cdot})[1].
\]
Now $D_{\underline{x}}$ is a bounded complex of flat $R$-modules. Therefore  there is the quasi-isomorphism $L_{\cdot} \otimes _R D_{\xx} \to X \otimes_R D_{\xx}$. It will be enough 
to show that the complex $X \otimes_R D_{\xx}$ is homologically trivial. But this is true since $X \otimes_R D_{\xx}^i = 0$ 
for all $i \in \mathbb{Z}$ because of $\Supp_R X \subseteq V(I)$. This proves the statement in (a). 

By view of the above  investigations there is a quasi-isomorphism $E^{\cdot} \to \Hom_R(\Cech, E^{\cdot})$ where 
the second complex is an injective resolution of $\hat{M}^I$. This proves the statement in (b).
\end{proof}

We will continue with a result on the vanishing of certain $\Ext$-modules. 
As a step towards to our main result we shall prove a partial result in order to 
characterize the completion. 

\begin{theorem} \label{3.4} Let $R$ denote a commutative Noetherian  ring. Let $M$ denote an 
arbitrary $R$-module. Let $x \in R$ be an element such that $0:_M x^{\alpha} = 0 :_M x^{\beta}$ 
for all $\alpha \geq \beta$. Then  the following conditions are equivalent:
\begin{itemize}
\item[(i)] $M$ is $xR$-adic complete.
\item[(ii)] $\Ext_R^i(R_x,M) = 0$  for $i = 0,1$.
\end{itemize}
\end{theorem}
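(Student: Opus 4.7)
The implication (i) $\Rightarrow$ (ii) is immediate from Theorem \ref{3.1} applied to $F = R_x$: the module $R_x$ is flat and $R_x \otimes_R R/xR = 0$ (since $x$ is a unit in $R_x$), and (i) identifies $M$ with $\hat{M}^{xR}$, so Theorem \ref{3.1} gives $\Ext^i_R(R_x, M) = 0$ for all $i \in \mathbb{Z}$, in particular for $i = 0, 1$.

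For (ii) $\Rightarrow$ (i), the plan is to reformulate (ii) via Lemma \ref{2.6} and exploit a short exact sequence of pro-systems. By Lemma \ref{2.6}, condition (ii) is equivalent to $\varprojlim \{M, x\} = 0 = \varprojlim{}^1 \{M, x\}$. I would introduce the morphism of pro-systems $\{M, x\} \to \{x^\alpha M\}$ whose level-$\alpha$ component is the surjection $m \mapsto x^\alpha m$, where $\{x^\alpha M\}$ is equipped with the natural inclusions $x^{\alpha+1} M \hookrightarrow x^\alpha M$ as transition maps. A direct check shows this is a morphism of pro-systems whose kernel is $\{0 :_M x^\alpha, x\}$ (transitions being multiplication by $x$), so one obtains a short exact sequence of pro-systems and hence a six-term exact sequence
\[
0 \to \varprojlim \{0 :_M x^\alpha, x\} \to \varprojlim \{M, x\} \to \cap_\alpha x^\alpha M \to \varprojlim{}^1 \{0 :_M x^\alpha, x\} \to \varprojlim{}^1 \{M, x\} \to \varprojlim{}^1 \{x^\alpha M\} \to 0,
\]
using $\varprojlim \{x^\alpha M\} = \cap_\alpha x^\alpha M$.

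The crucial step, and the main obstacle, is to prove $\varprojlim{}^1 \{0 :_M x^\alpha, x\} = 0$; this is exactly where the stabilization hypothesis enters. By assumption there is some $\beta$ with $0 :_M x^\alpha = 0 :_M x^\beta =: K$ for all $\alpha \geq \beta$, so $x^\beta K = 0$. Consequently the composed transition $K \xrightarrow{x^\beta} K$ is zero, so the images $\im(K_\gamma \to K_\alpha)$ vanish for $\gamma - \alpha$ sufficiently large; the Mittag-Leffler criterion is trivially satisfied, and the vanishing follows. Substituting into the six-term sequence with $\varprojlim \{M, x\} = 0 = \varprojlim{}^1 \{M, x\}$ yields both $\cap_\alpha x^\alpha M = 0$ and $\varprojlim{}^1 \{x^\alpha M\} = 0$. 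Finally, applying $\varprojlim$ to the short exact sequence of pro-systems $0 \to \{x^\alpha M\} \to \{M\} \to \{M/x^\alpha M\} \to 0$ (the middle term is constant, so its $\varprojlim{}^1$ vanishes) produces the exact sequence $0 \to \cap_\alpha x^\alpha M \to M \to \hat{M}^{xR} \to \varprojlim{}^1 \{x^\alpha M\} \to 0$ whose outer terms are zero; thus $M \stackrel{\sim}{\to} \hat{M}^{xR}$ and $M$ is $xR$-adically complete.
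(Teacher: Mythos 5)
Your proof is correct. For (i) $\Rightarrow$ (ii) the appeal to Theorem \ref{3.1} with $F = R_x$ is legitimate, since $R_x$ is flat and $R_x \otimes_R R/xR = R_x/xR_x = 0$; and your (ii) $\Rightarrow$ (i) argument is sound --- in particular the pro-zero claim for the kernel tower $\{0:_M x^{\alpha}, x\}$ is exactly where the stabilization hypothesis enters, and you use it correctly: for fixed $\alpha$ and $\gamma \geq \alpha + \beta$ the transition $0:_M x^{\gamma} \to 0:_M x^{\alpha}$ is multiplication by $x^{\gamma - \alpha}$, which annihilates $0:_M x^{\gamma} \subseteq 0:_M x^{\beta}$.

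The paper's proof rests on the same two pillars (Lemma \ref{2.6} and the stabilization of the annihilators) but is organized differently. Instead of splitting the tower $\{M,x\}$ into a pro-zero kernel tower and $\{x^{\alpha}M\}$, the paper first replaces $x$ by $x^{\beta}$ (harmless for both conditions) so that $xN = 0$ for $N = 0:_M x^{\beta}$, and then uses the ladder of exact rows $0 \to M/N \stackrel{x^{\alpha}}{\to} M \to M/x^{\alpha}M \to 0$; passing to inverse limits yields in one stroke the four-term sequence $0 \to \varprojlim\{M/N,x\} \to M \to \hat{M}^x \to \varprojlim{}^1\{M/N,x\} \to 0$, and $\Ext^i_R(R_x,M) \simeq \Ext^i_R(R_x,M/N)$ because $\Ext^i_R(R_x,N)=0$ ($x$ acts invertibly on $R_x$ and as zero on $N$). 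So the paper absorbs the torsion at the level of $\Ext$, while you absorb it at the level of $\varprojlim{}^1$ of a pro-zero system; the paper also obtains both implications simultaneously from one exact sequence, whereas you import Theorem \ref{3.1} for (i) $\Rightarrow$ (ii). Both routes are valid; yours avoids the rescaling $x \mapsto x^{\beta}$ and makes the Mittag-Leffler mechanism explicit, at the cost of juggling two short exact sequences of towers.
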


\begin{proof} Let $N = \cup_{\alpha \geq 1} 0 :_M x^{\alpha}$. Then $N = 0:_M x^{\beta}$ as 
follows by the definition of $\beta$. 
That is,  in both of the statements we may replace 
$x$ by $x^{\beta}$ without loss of generality. That is,  we may assume that $x N = 0$. Then there is a 
commutative diagram  with exact rows:
\[
\begin{array}{ccccccl}
0   \to & M/N & \stackrel{x^{\alpha +1}}{\longrightarrow} & M & \to & M/x^{\alpha +1}M &  \to 0 \\
           &  \downarrow x &                                          & \parallel &  & \downarrow              &               \\
0   \to & M/N & \stackrel{x^{\alpha}}{\longrightarrow} &  M  & \to  & M/x^{\alpha} M& \to   0. 
\end{array}
\] 
By passing to the inverse limit it provides an exact sequence 
\[
0 \to \varprojlim \{M/N, x\} \to M \to \hat{M}^x \to \varprojlim{}^1 \{M/N, x\} \to 0.
\]
By view of Lemma \ref{2.5} it yields that $\varprojlim{}^{i} \{M/N, x\} \simeq \Ext_R^i(R_x,M/N)$ for 
$i = 0,1$. 
Furthermore, the short exact sequence 
$0 \to N \to M \to M/N \to 0$ induces an isomorphism $\Ext_R^i(R_x,M) \simeq \Ext_R^i(R_x,M/N)$. 
This follows by the long exact cohomology sequence and $\Ext_R^i(R_x,N) = 0$ for all 
$i \in \mathbb{Z}$. For this vanishing note that the multiplication by $x$ acts on $R_x$ as an 
isomorphism and on $N$ as the zero map. 

That is,  the  homomorphism $M \to \hat{M}^x$ is an isomorphism, if and only if  
$\Ext^i_R(R_x,M) = 0$ for $i = 0,1$ which proves (i) $\Longleftrightarrow$ (ii). 
\end{proof}

\noindent {\it Proof of Theorem \ref{1.1}.}  The implication (i) $\Longrightarrow$ (ii) is a 
consequence of Theorem \ref{3.1}. Moreover, the implications  (ii) $\Longrightarrow$ (iii) 
is trivial and (iii) $\Longrightarrow$ (iv)
is easy to see. In order to prove (iv) $\Longrightarrow$ (i) we first note that 
$\Ext_R^1(R_{x_i}, R) = 0$ 
for all $i = 1,\ldots,r$. By Theorem \ref{3.4} this implies that $M$ is $x_iR$-adically 
complete for $i = 1,\ldots,r$. Since $\underline{x} = \{x_1,\ldots,x_r\}$ generates $I$ up to the radical 
it follows that $M$ is $I$-adic complete by Proposition \ref{2.5} since $M$ is $I$-adic separated. This 
completes the proof. 
\hfill $\Box$

\section{An alternative proof of Theorem \ref{3.1}}
In this section there is an alternative proof of Theorem \ref{3.1} based on some results 
on inverse limits. The results of the following two lemmas might be also of some independent 
interest. These statements are particular cases of certain 
spectral sequences on inverse limits (see \cite{J3}). Here we give an elementary proof based 
on the description of $\varprojlim{}^1$ as discussed in \cite{cW}. 

\begin{lemma} \label{4.1} Let $R$ denote a commutative ring. 
 Let $\{M_{\alpha}\}_{\alpha \in \mathbb{N}}$ be a direct system of $R$-modules. Let $N$ denote an arbitrary $R$-module.
Then there is a short exact sequence
\[
0 \to \varprojlim{}^1 \Ext^{i-1}_R(M_{\alpha}, N) \to \Ext_R^i(\varinjlim M_{\alpha}, N) \to \varprojlim \Ext_R^i(M_{\alpha},N)
\to 0
\]
for all $i \in \mathbb{Z}$. 
\end{lemma}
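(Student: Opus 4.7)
The plan is the classical telescope argument for a countable direct system. My starting point would be the mapping telescope short exact sequence
\[
0 \to \bigoplus_{\alpha \in \mathbb{N}} M_\alpha \xrightarrow{\mathrm{id} - \sigma} \bigoplus_{\alpha \in \mathbb{N}} M_\alpha \to \varinjlim_\alpha M_\alpha \to 0,
\]
where $\sigma$ sends the $\alpha$-th summand $M_\alpha$ into $M_{\alpha+1}$ via the transition map $\varphi_\alpha$ of the direct system. This presents $\varinjlim M_\alpha$ as the cokernel of a shift on a countable direct sum, and its exactness is standard.

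Next I would apply the contravariant functor $\Ext_R^{\ast}(-, N)$ and use the natural isomorphism $\Ext_R^i\bigl(\bigoplus_\alpha M_\alpha,\, N\bigr) \cong \prod_\alpha \Ext_R^i(M_\alpha, N)$ (valid for all $i$) to rewrite the resulting long exact sequence as
\[
\cdots \to \prod_\alpha \Ext^{i-1}_R(M_\alpha, N) \xrightarrow{\partial} \prod_\alpha \Ext^{i-1}_R(M_\alpha, N) \to \Ext^i_R(\varinjlim M_\alpha, N) \to \prod_\alpha \Ext^i_R(M_\alpha, N) \xrightarrow{\partial} \prod_\alpha \Ext^i_R(M_\alpha, N) \to \cdots,
\]
with $\partial = \mathrm{id} - \sigma^{\ast}$. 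A naturality check would identify $\sigma^{\ast}$ with the shift map of the inverse system $\{\Ext_R^i(M_\alpha, N)\}_\alpha$ whose transition maps are the induced $\Ext_R^i(\varphi_\alpha, N)$.

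To close the argument I would recall the standard description of $\varprojlim$ and $\varprojlim{}^1$ for a countable inverse system $\{A_\alpha\}$ of abelian groups: they are respectively the kernel and cokernel of $\mathrm{id} - \mathrm{shift}$ acting on $\prod_\alpha A_\alpha$ (see \cite[Section 3.5]{cW}). Cutting the long exact sequence at the middle term $\Ext_R^i(\varinjlim M_\alpha, N)$ then produces exactly the claimed short exact sequence. For $i < 0$ all three terms vanish, so the statement is automatic there.

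The main bookkeeping step, and the place where I expect the only real work, is the naturality verification that the connecting map $\sigma^{\ast}$ on $\prod_\alpha \Ext_R^i(M_\alpha, N)$ agrees with the shift coming from the transition maps $\Ext_R^i(\varphi_\alpha, N)$ of the inverse system. Once that is in place, splitting the long exact sequence into the desired short exact piece is purely formal.
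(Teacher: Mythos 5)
Your argument is correct and complete: the telescope presentation of $\varinjlim M_\alpha$ as the cokernel of $\mathrm{id}-\sigma$ on $\oplus_\alpha M_\alpha$, the identification $\Ext_R^i(\oplus_\alpha M_\alpha,N)\simeq\prod_\alpha\Ext_R^i(M_\alpha,N)$, and the description of $\varprojlim$ and $\varprojlim{}^1$ as kernel and cokernel of $\mathrm{id}-\mathrm{shift}$ on the product are exactly the ingredients of the standard Milnor sequence. The paper gives no proof here beyond citing \cite[Lemma 2.6]{pS3}, and your proposal is precisely that argument carried out in detail, so it matches the intended approach.
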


\begin{proof} This result was proved in (\cite[Lemma 2.6]{pS3}) for a Noetherian ring. The same 
arguments work in the general case. \end{proof} 

In the following we need a certain dual statement of Lemma \ref{4.1}. In a certain sense it will be 
the key argument for the second proof of Theorem \ref{3.1}. 

\begin{lemma} \label{4.2} Let $R$ denote a commutative  ring. Let $M$ denote an 
arbitrary $R$-module. Let $\{N_{\alpha}\}_{\alpha \in \mathbb{N}}$ be an inverse  system of $R$-modules with 
$\varprojlim{}^1N_{\alpha} = 0$. 
Then there is a short exact sequence
\[
0 \to \varprojlim{}^1 \Ext^{i-1}_R(M,N_{\alpha}) \to \Ext_R^i(M,\varprojlim N_{\alpha}) \to \varprojlim 
\Ext_R^i(M,N_{\alpha})\to 0
\]
for all $i \in \mathbb{Z}$. 
\end{lemma}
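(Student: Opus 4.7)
The plan is to derive the sequence from the long exact $\Ext$-sequence attached to a standard presentation of $\varprojlim N_\alpha$ in terms of products, and then to recognize the kernel and cokernel of the resulting ``shift'' map as $\varprojlim$ and $\varprojlim{}^1$ respectively.

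First, I would start from the classical four-term exact sequence
\[
0 \to \varprojlim N_\alpha \to \prod{}_\alpha N_\alpha \stackrel{1-\sigma}{\longrightarrow} \prod{}_\alpha N_\alpha \to \varprojlim{}^1 N_\alpha \to 0,
\]
where $\sigma$ is the endomorphism of $\prod_\alpha N_\alpha$ induced by the transition maps of the system (see \cite{cW}). Under the hypothesis $\varprojlim{}^1 N_\alpha = 0$ the map $1-\sigma$ is surjective, so this collapses into a short exact sequence of $R$-modules, with $\varprojlim N_\alpha$ as kernel.

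Second, I would apply $\Ext_R^{*}(M,-)$ to this short exact sequence and use that $\Ext$ commutes with direct products in its second argument: fixing a projective resolution $P_\bullet$ of $M$, one has $\Hom_R(P_\bullet, \prod_\alpha N_\alpha) \simeq \prod_\alpha \Hom_R(P_\bullet, N_\alpha)$, and since cohomology of complexes of abelian groups commutes with products, $\Ext_R^j(M, \prod_\alpha N_\alpha) \simeq \prod_\alpha \Ext_R^j(M, N_\alpha)$ for every $j \in \mathbb{Z}$. By naturality of $1-\sigma$ in the $N_\alpha$, the map it induces on these products coincides with the shift endomorphism attached to the inverse system $\{\Ext_R^j(M, N_\alpha)\}$. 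Chopping the long exact $\Ext$-sequence at each $i$ then produces a short exact sequence whose left-hand term is the cokernel of this shift on degree $i-1$ and whose right-hand term is the kernel on degree $i$. Re-applying the four-term presentation, now to the inverse systems $\{\Ext_R^j(M,N_\alpha)\}$, identifies these flanking pieces with $\varprojlim{}^1 \Ext_R^{i-1}(M,N_\alpha)$ and $\varprojlim \Ext_R^i(M,N_\alpha)$ respectively, which yields the claimed short exact sequence.

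The only delicate point, and where I would take some care, is the naturality verification that the map $\Ext_R^*(M, 1-\sigma)$ on a product of Ext-modules really agrees with the shift attached to the inverse system $\{\Ext_R^j(M, N_\alpha)\}$. This follows from the functoriality of $\Ext_R^*(M, -)$ applied to each transition map $\sigma_\alpha : N_{\alpha+1} \to N_\alpha$, together with the compatibility of that functoriality with the canonical projections $\prod_\alpha N_\alpha \to N_\alpha$; it is a diagram chase but is the one place that needs to be stated cleanly. The remainder is routine bookkeeping around the long exact $\Ext$-sequence.
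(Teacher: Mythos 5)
Your proposal is correct and follows essentially the same route as the paper: both start from the short exact sequence $0 \to \varprojlim N_\alpha \to \prod N_\alpha \to \prod N_\alpha \to 0$ obtained from the vanishing of $\varprojlim^1 N_\alpha$, apply the long exact $\Ext$-sequence together with the fact that $\Ext_R^j(M,-)$ commutes with direct products in the second variable, and identify the cokernel and kernel of the induced shift maps with $\varprojlim^1 \Ext_R^{i-1}(M,N_\alpha)$ and $\varprojlim \Ext_R^i(M,N_\alpha)$. Your extra attention to the naturality of the induced shift on the product of $\Ext$-modules is a point the paper leaves implicit, but the argument is the same.
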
 

\begin{proof} Because of $\varprojlim{}^1N_{\alpha} = 0$ there is a short exact sequence 
\[
0 \to \varprojlim N_{\alpha} \to \prod N_{\alpha} \to \prod N_{\alpha} \to 0,
\]
where the third homomorphism is the transition map (see \cite{cW}). It induces a long exact cohomology sequence 
\begin{gather*}
\cdots \to \prod \Ext_R^{i-1}(M, N_{\alpha} ) \stackrel{f}{\to} \prod \Ext^{i-1}_R(M, N_{\alpha} ) \to 
\Ext^i_R(M,\varprojlim N_{\alpha}) \\
\to \prod \Ext^i_R(M, N_{\alpha} ) \stackrel{g}{\to} \prod \Ext^i_R(M, N_{\alpha} ) \to \cdots .
\end{gather*} 
To this end recall that $\Ext$ transforms direct products into direct products in the second variable 
and cohomology commutes with direct products 
(see e. g. \cite{EJ}). Now it is known (see e.g. \cite{cW})  that 
\[
\Coker f \simeq \varprojlim{}^1 \Ext^{i-1}_R(M,N_{\alpha}) \text{ and } 
\Ker g = \varprojlim \Ext_R^i(N,  N_{\alpha}).
\]
This completes the proof.
\end{proof}

\begin{remark} The assumption that $\varprojlim{}^1N_{\alpha} = 0$ is fulfilled whenever the 
projective system $\{N_{\alpha}\}$ satisfies the Mittag-Leffler condition. That is, for instance 
when the transition map $N_{\alpha + 1} \to N_{\alpha}$ is surjective for all $\alpha \geq 1$. 
\end{remark} 

Note that the proof of the following Theorem is motivated by some arguments done 
by Buchweitz and Flenner (see \cite{BF}). 

\begin{theorem} \label{4.3} Let $R$ denote a commutative ring. Let $M$ denote an  $R$-module. 
Let $F$ denote a flat $R$-module satisfying $F \otimes_R R/I = 0$. 
Then $\Ext^i_R(F,\hat{M}^I) = 0$ for all $i \in \mathbb{Z}$.  
\end{theorem}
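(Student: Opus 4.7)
The plan is to apply Lemma \ref{4.2} to the inverse system $\{M/I^{\alpha} M\}_{\alpha \geq 1}$ of quotients of $M$. Because its transition maps are surjective, the system satisfies Mittag--Leffler (cf.\ the remark after Lemma \ref{4.2}), giving $\varprojlim{}^1 (M/I^{\alpha} M) = 0$; also $\varprojlim (M/I^{\alpha} M) = \hat{M}^I$ by definition of the $I$-adic completion. Reading Lemma \ref{4.2} with $F$ in the role of its first argument then produces, for every $i \in \mathbb{Z}$, a short exact sequence
$$
0 \to \varprojlim{}^1 \Ext^{i-1}_R(F, M/I^{\alpha} M) \to \Ext^i_R(F, \hat{M}^I) \to \varprojlim \Ext^i_R(F, M/I^{\alpha} M) \to 0.
$$
So the theorem will follow once I verify $\Ext^j_R(F, M/I^{\alpha} M) = 0$ for every $j \in \mathbb{Z}$ and every $\alpha \geq 1$.

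For the key vanishing, I would first use Proposition \ref{2.1} applied to $X = R/I^{\alpha}$, whose support lies in $V(I)$, to get $F \otimes_R R/I^{\alpha} = F/I^{\alpha}F = 0$. Next I would fix a projective $R$-resolution $P_{\cdot} \to F$. Since $F$ is $R$-flat, $\Tor_q^R(F, R/I^{\alpha}) = 0$ for $q \geq 1$, so $P_{\cdot} \otimes_R R/I^{\alpha}$ is an acyclic, non-negatively graded complex of projective $R/I^{\alpha}$-modules. Any bounded-below acyclic complex of projectives is null-homotopic, with a contracting homotopy built degree by degree out of projectivity.

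Because $M/I^{\alpha} M$ is killed by $I^{\alpha}$, the change-of-rings adjunction then yields an isomorphism of complexes
$$
\Hom_R(P_{\cdot}, M/I^{\alpha} M) \;\cong\; \Hom_{R/I^{\alpha}}(P_{\cdot} \otimes_R R/I^{\alpha},\, M/I^{\alpha} M),
$$
and the contraction of the first factor on the right transfers to a contraction of the whole complex. Hence its cohomology, which computes $\Ext^\ast_R(F, M/I^{\alpha} M)$, vanishes in every degree, and the argument is complete. The main obstacle is precisely this vanishing step: one cannot manipulate $\Ext$ directly through an adjunction, since deriving $\Hom$ in the second variable is awkward, so the flatness of $F$ has to be exploited at the level of resolutions to promote the pointwise identity $F/I^{\alpha}F = 0$ coming from Proposition \ref{2.1} into a homotopy contraction of an entire complex.
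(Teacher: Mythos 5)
Your proposal is correct and follows essentially the same route as the paper: the short exact sequence from Lemma \ref{4.2} applied to the surjective system $\{M/I^{\alpha}M\}$, the reduction to the vanishing of $\Ext^j_R(F,M/I^{\alpha}M)$, and the flat base change of a projective resolution of $F$ to $R/I^{\alpha}$ combined with adjunction and $F/I^{\alpha}F=0$. The only cosmetic difference is that you phrase the final vanishing via a null-homotopy of the acyclic complex of projectives, whereas the paper reads it off as $\Ext^\ast_{R/I^{\alpha}}(0,\,M/I^{\alpha}M)=0$; these are the same argument.
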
 

\begin{proof}  By definition we have $\hat{M}^I = \varprojlim{}_{\alpha} M/I^{\alpha}M$. 
By Lemma \ref{4.2} there is a short exact sequence 
\[
0 \to \varprojlim{}^1 \Ext^{i-1}_R(F,M/I^{\alpha}M) \to \Ext_R^i(F,\hat{M}^I) 
\to \varprojlim \Ext^i_R(F,M/I^{\alpha}M) \to 0.
\]
In order to show the vanishing of $\Ext_R^i(F,\hat{M}^I)$ for all $i \in \mathbb{Z}$ it will be enough to 
show the vanishing of $\Ext^i(F,M/I^{\alpha}M)$ for all $i \in \mathbb{Z}$ and all $\alpha \geq 1$. 
We claim that 
\[
\Ext_R^i(F,M/I^{\alpha}M) \simeq \Ext^i_{R/I^{\alpha}}(F/I^{\alpha}F,M/I^{\alpha}M) 
\]
for all $i \in \mathbb{Z}$ and all $\alpha \geq 1$. 
In order to show these isomorphisms let $L_{\cdot}$ be a projective 
resolution of $F$ as an $R$-module. As $F$ is flat as an $R$-module $\Tor_i^R(F,R/I^{\alpha}) =0$ 
for all $i > 0$ and $L_{\cdot} \otimes_R R/I^{\alpha}$ is a projective resolution of $F/I^{\alpha}F$ 
as an $R/I^{\alpha}$-module. By adjunction there are isomorphisms of complexes 
\[
\Hom_{R/I^{\alpha}}(L_{\cdot} \otimes_R R/I^{\alpha}, M/I^{\alpha}M) \simeq \Hom_R(L_{\cdot}, M/I^{\alpha}M).
\]
By taking cohomology it proves the above claim. 
The vanishing follows now  because of $F/I^{\alpha} F =0$ as a consequence of the assumption $F\otimes_R R/I = 0$. 
\end{proof} 
 
\begin{remark} \label{4.4} In fact Theorem \ref{4.3} is a slight sharpening of Theorem \ref{3.1}. To this end 
recall that for an $R$-module $M$ and an ideal $I \subset R$ the $I$-adic completion $\hat{M}^I$ is not necessarily 
$I$-adic complete (as used in the proof of Theorem \ref{3.1}). For an explicit example see J. Bartijn's Thesis 
\cite[I, \S 3, page 19]{jB}. Note that in this example the ring is not Noetherian. It grows out of 
\cite[III, \S 2, Exerc. 12]{nB}. See also  A.~Yekutieli (see \cite[Example 1.8]{aY}).  
\end{remark}

\section*{Acknowledgements} The author thanks Anne-Marie Simon and the reviewer for a careful reading of the manuscript and suggesting some comments.

\end{document}